\theoremstyle{plain}
\newtheorem{theorem}{Теорема}
\newtheorem*{theoJ}{Неравенство Йенсена}
\newtheorem*{theoremH}{Теорема Хёрмандера}
\newtheorem{propos}{Предложение}
\newtheorem{corollary}{Следствие}
\theoremstyle{definition}
\newtheorem{remark}{Замечание}
\newtheorem{example}{Пример}
\newcommand{\NN}{\mathbb{N}}
\newcommand{\RR}{\mathbb{R}}
\newcommand{\CC}{\mathbb{C}}
\newcommand{\const}{{\rm const}}
\renewcommand{\Im}{{\rm Im \,}}
\renewcommand{\leq}{\leqslant}
\renewcommand{\geq}{\geqslant}
\DeclareMathOperator{\im}{im}
\DeclareMathOperator{\dist}{dist}
\DeclareMathOperator{\Int}{int}
\DeclareMathOperator{\Hol}{Hol}
\DeclareMathOperator{\clos}{clos}
\DeclareMathOperator{\loc}{loc}
 \numberwithin{equation}{section}  
\author{\MakeUppercase{Р.\,А.~Баладай, Б.\,Н.\,Хабибуллин}}
\begin{document}
\selectlanguage{russian}
\maketit
\label{firstpage}

\abstract{В теории функций комплексных переменных нередко возникают задачи поточечной оценки сверху функции или ее усреднений при известных интегральных ограничениях на рост этой функции. Мы предлагаем вариант подхода к таким задачам, основанный на интегральном неравенстве Йенсена с выпуклой функцией.} 

\keywords{голоморфная функция, среднее значение, выпуклая функция, (плюри-)суб\-г\-а\-р\-м\-о\-н\-и\-ч\-е\-с\-кая 
функция, $\bar\partial$-задача, интегральное неравенство Йенсена, пространство с мерой}

\udk{517.5}  

\englishabstract{Problems  pointwise estimates from above functions or its averages often arise in the function theory under known integral restrictions on the growth of this function. We offer an approach to such problems based on the integral Jensen's inequality with the convex function.}

\englishkeywords{holomorphic function, average value, convex function, (pluri-)subharmonic function, $\bar\partial$-problem, integral Jensen's inequality, measure space}  

\Received{\_.\_.201\_}  

\thnk{Работа выполнена при финансовой поддержке Российского фонда
фундаментальных исследований, проект 16-01-00024а.}


\section{Введение}

\subsection{О  результатах}\label{s00} 
Необходимость оценок поточечного роста голоморфной или иной фу\-н\-к\-ции $f$ комплексных переменных при a priori известной или заданной глобальной интегральной оценке на нее (с весом $v$) --- довольно часто возникающая задача. Один из наиболее распространенных способов перехода от интегральной оценке с весом к поточечной, или равномерной, в самых общих ситуациях состоит в использовании (плюри-)субгармоничности функции $|f|$ или $\ln |f|$ и оценке сверху через локальную  точную 
 верхнюю грань по шарам, полидискам (кругам),  сферам (окружностям) веса $v$ с <<малыми>> добавками. Этот способ как промежуточный этап применялся, например, в статьях  А.\,В.~Абанина \cite[лемма 4]{Ab1986}, \cite[\S~1, лемма]{Ab1987}
и его диссертации \cite[леммы 1.3.1, 1.3.3]{Abd}, в совместной работе  К.\,Д. Бирштедта, Х. Бонета, Я. Таскинена  \cite[{\bf 3.C}, предложение 3.8]{BBT} и др., часто с дополнительными требованиями, которые здесь не обсуждаются. 
Кратко такой переход проиллюстрирован в \cite[следствие HB]{BKhFa}, \cite[лемма 9.1]{Kh01}  и в замечании \ref{r:rr} во вспомогательных целях  для  примера \ref{ex:c}.
Для одной комплексной переменной более тонкие оценки сверху через локальные усреднения веса $v$ по окружностям установлены и использованы  в работах О.\,В.~Епифанова \cite{Ep90}, \cite{Ep92}, диссертации 
А.\,В.~Абанина  \cite[лемма 1.3.2]{Abd} и в совместной статье Т.\,Ю~Байгускарова, 
Г.\,Р.~Талиповой и Б.\,Н.~Хабибуллина  \cite[теорема 2]{BTKhA};  через усреднения по кругам для одной переменной 
и по шарам для  многих --- в совместных  статьях Т.\,Ю~Байгускарова и Б.\,Н.~Хабибуллина соответственно   \cite{KhB16}  и \cite[теоремы 1,2]{BKhFa}. Дополняют их результаты из п.~\ref{3_1},
где приведен пример \ref{ex:Fs}, демонстрирующий  степень точности теоремы \ref{est:hf}, и сформулирована 
 теорема \ref{th:pfvv}, обобщающая теорему  \ref{est:hf}.  Теорема \ref{th:pfvv} --- следствие теоремы \ref{th:pfv-} из п.~\ref{4_1}.

Близкий вопрос --- вывод равномерных оценок решений $\bar{\partial}$-задачи  из интегральных. C рядом результатов 
о равномерных и локально усредненных оценках сверху решений этой задачи через правую часть  в случае одной переменной можно ознакомится  по статьям О.\,В.~Епифанова \cite[теорема 2, следствие]{Ep90}, \cite[теоремы 1,2]{Ep92} и  Х.~Ортега-Серды \cite{JC} и билиографии в них. В настоящей статье мы также  даем локально усредненную  оценку роста решений $\bar{\partial}$-задачи в теореме  \ref{cor:iH}  из пп.~\ref{3_2}, доказанной в разделе \ref{s4}.

В  упомянутых статьях \cite{KhB16} и \cite{BKhFa} было использовано интегральное неравенство Йенсена. Здесь мы 
применяем развитие этого приема. С целью его унификации значительная часть (раздел \ref{s:2}) настоящей статьи посвящена различным формам трактовки неравенства Йенсена (теорема \ref{JId} и другие утверждения и примеры раздела \ref{s:2}), нацеленным именно на применения к оценкам голоморфных функций и решений $\bar{\partial}$-задачи. Из последних известных нам продвижений по совершенствованию интегрального неравенства Йенсена отметим здесь статью  К.\,П.~Никулеску \cite[теорема A]{N2001},  монографию 
К.\,П.~Никулеску  и Л.-Э.~Перссона \cite[теоремы 1.8.1, 1.8.4]{NP06}, совместную  статью З. Павича,  Й.~Печарича и  И.~Перича \cite{PPP11}, статью Л.~Хорвата \cite{LH}, совместную статью Ш.~Абрамовича и Л.-Э.~Перссона \cite{AP}, 
а также библиографию в них. 

Авторы глубоко признательны рецензенту за ряд полезных усовершенствований, замечаний и полезных советов.

\subsection{Основные обозначения, определения и соглашения}
Как обычно,  $\NN$ --- множество натуральных чисел, а $\RR$ и  $\CC$
 --- множества соответственно всех {\it  вещественных\/} и  {\it комплексных чисел\/}  
с их естественными структурами 
  и с соответствующим вложением $\RR\subset \CC$. При $n\in \NN$ пространство $\CC^n$ при необходимости отождествляем с $\RR^{2n}$.

Одним и тем же символом $0$ обозначаем, по контексту, число нуль, нулевой вектор, нулевую функцию, нулевую меру и т.\,п. 
 Для подмножества  $X$ упорядоченного векторного пространства  чисел, функций, мер или т.\,п. с элементом $0$ и отношением порядка $\leq$ полагаем  $X^+:=\{x\in X\colon 0\leq x\}$ --- все положительные элементы из $X$; $x^+:=\max \{0,x\}$. 
{\it Положительность\/} всюду понимается, в соответствии с контекстом, как $\geq 0$, а $>0$ --- {\it строгая положительность.\/} Аналогично для отрицательности.  Функция $f\colon X\to Y$ с упорядоченными  $(X, \leq)$, $(Y, \leq)$ {\it возрастающая}, если для любых $x_1,x_2\in X$ из $x_1\leq x_2$ следует $f(x_1)\leq f(x_2)$, и {\it строго возрастающая,\/} если 
для любых $x_1,x_2\in X$ из $x_1< x_2$ следует $f(x_1)< f(x_2)$. Аналогично для убывания.

Для меры  (функции) $a$ её сужение на множество $X$ обозначаем как $a\bigm|_X$. Через  $\circ$ обозначаем  операцию суперпозиции функций.

Для подмножества $S$ в $\RR$ или в $\CC^n$ через $\clos S:=:\overline S$, $\Int S$, $\partial S$ обозначаем соответственно {\it замыкание\/}, {\it внутренность\/}  и {\it границу\/}  множества $S$. Через $\dist (\cdot , \cdot) $ обозначаем функции евклидова расстояния между парами точек, между  точкой и множеством, а также  
между множествами в $\CC^n$ или в $\RR^n$, $n\in \NN$; $\dist (\cdot , \varnothing):=\inf \varnothing:=+\infty$.   Для  
числа $r>0$ и $x\in \RR^n$ 
\begin{equation}\label{Bxr}
	B(x,r):=\{x' \in \RR^n \colon \dist(x',x)<r\} 
\end{equation}
 ---  открытый шар (интервал при $n=1$ и круг  при $n=2$) с центром $x$ радиуса $r$; $ \overline{B}(x_0,r_0)$ --- его замыкание, а $\partial B(x,r)$ --- сфера с центром $x$ радиуса $t$.

Для открытого множества  $\mathcal O \subset \CC^n$ 	 через  $\Hol (\mathcal O)$ обозначаем  векторное   пространство   над полем $\CC$  	 функций, голоморфных на $\mathcal O$.

Через $\const (a_1, a_2, \dots)\in \RR$ обозначаем постоянные, вообще говоря, зависящие только  от $a_1, a_2, \dots$ и, если не оговорено противное, только от них, но зависимость от размерности $n$ и открытого множества $\mathcal O$ не указываем.

\section{Оценки голоморфных функций и решения $\bar{\partial}$-задачи}\label{s10}

\subsection{Оценки голоморфных функций}\label{3_1}  Значительную роль и  как самостоятельный объект,  и как вспомогательный аппарат в комплексном анализе 
играют классы голоморфных функций $f\in \Hol (\mathcal O)$ на  {\it открытом множестве\/}  $\mathcal O\subset \CC^n$,  удовлетворяющих при некотором $0<p\in \RR$ ограничению вида 
\begin{equation}\label{df:cle}
\|f\|_w:=
\Bigl( \int_{\mathcal O} |f|^p e^{-w}\, d\lambda \Bigr)^{1/p}<+\infty, \quad\text{$\lambda$ --- {\it мера Лебега на\/}  $\mathcal O$},
\end{equation}
 где функция  $w\colon [-\infty,+\infty]\to [-\infty,+\infty]$ такова, что интеграл Лебега в \eqref{df:cle} корректно определен. Функцию $w$ часто называют {\it весовой функцией,\/} или просто  {\it весом.} 

Для   интегрируемой по $\lambda$ в шаре $B(z,r)\subset \CC^n$ функции $w$
введем характеристику
\begin{equation}\label{df:B}
	B_w(z,r)\overset{\eqref{Bxr}}{:=} \frac{1}{\lambda\bigl(B(z,r)\bigr)} \int\limits_{B(z,r)} w\,d \lambda=\frac{n!}{\pi^nr^{2n}}\int\limits_{B(z,r)} w\,d \lambda 
\end{equation}
--- {\it среднее функции\/ $w$ по шару\/ $B(z,r)$.} 
Имеет место 
\begin{theorem}\label{est:hf} Пусть выполнено \eqref{df:cle}. Тогда для всех $z\in \mathcal O$
\begin{equation}\label{esusr}
	{\,\ln\,}\bigl|f(z)\bigr|\overset{\eqref{df:B}}{\leq} \frac{1}{p}\,\inf_{0<r<\dist (z,\CC^n\setminus \mathcal O)}\left(B_w(z,r)			+	{2n}\, {\,\ln\,} \frac{1}{r}\right) +{\,\ln\,} \|f\|_w+\frac{1}{p}{\,\ln\,} \frac{n!}{\pi^n}. 
	\end{equation}
\end{theorem}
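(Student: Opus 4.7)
The strategy is exactly the one advertised in the abstract: combine a sub-mean value inequality with Jensen's integral inequality applied to the concave logarithm. Fix $z\in\mathcal O$ and an admissible radius $0<r<\dist(z,\CC^n\setminus\mathcal O)$, so that $\overline{B}(z,r)\subset\mathcal O$. Since $f\in\Hol(\mathcal O)$, the function $\ln|f|$ is plurisubharmonic on $\mathcal O$, and the sub-mean value inequality on balls in $\CC^n\cong\RR^{2n}$, normalised as in \eqref{df:B}, yields $\ln|f(z)|\leq B_{\ln|f|}(z,r)$. Multiplying by $p>0$ and using the pointwise identity $\ln|f|^p=\ln(|f|^pe^{-w})+w$ on $B(z,r)$, one obtains
\begin{equation*}
p\ln|f(z)|\leq B_{\ln(|f|^pe^{-w})}(z,r)+B_w(z,r).
\end{equation*}

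Next, Jensen's integral inequality for the concave function $\ln\colon[0,+\infty]\to[-\infty,+\infty]$ against the probability measure $\lambda/\lambda(B(z,r))$ on $B(z,r)$ gives
\begin{equation*}
B_{\ln(|f|^pe^{-w})}(z,r)\leq \ln B_{|f|^pe^{-w}}(z,r)\leq \ln\frac{n!\,\|f\|_w^p}{\pi^nr^{2n}}=\ln\frac{n!}{\pi^n}+p\ln\|f\|_w+2n\ln\frac{1}{r},
\end{equation*}
where the second inequality enlarges the integration domain from $B(z,r)$ to $\mathcal O$ and invokes \eqref{df:cle}. Substituting into the previous display, dividing by $p$, and taking the infimum over admissible $r$ produces \eqref{esusr} exactly, with the three pieces $\frac{1}{p}B_w(z,r)$, $\frac{2n}{p}\ln\frac{1}{r}$ and $\ln\|f\|_w+\frac{1}{p}\ln\frac{n!}{\pi^n}$ falling into place.

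The only technical caveat, and the step that will demand the most care, is checking that the averages $B_{\ln|f|}$, $B_{\ln(|f|^pe^{-w})}$, $B_w$ are well defined in $[-\infty,+\infty]$. This follows from the local integrability of plurisubharmonic functions (for $\ln|f|$), from the finiteness of $\|f\|_w^p$ imposed by \eqref{df:cle} (which also legitimises the Jensen step on each ball, since $|f|^pe^{-w}$ is then integrable on $B(z,r)$), and from the tacit assumption built into \eqref{df:cle} that the integrals involving $w$ are meaningful. I do not foresee any substantive mathematical obstacle beyond these standard measurability and integrability verifications; the real content of the argument is entirely carried by the two-line combination of the sub-mean value inequality and the Jensen bound.
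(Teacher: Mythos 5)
Your proof is correct and follows essentially the same route as the paper: the paper derives Theorem \ref{est:hf} from the general Theorem \ref{th:pfv-} by combining the sub-mean value inequality for the plurisubharmonic function $\ln|f|$ with the Jensen-type inequality \eqref{inJvd+l} of Corollary \ref{cor:i} (enlarging the integration domain from $B(z,r)$ to $\mathcal O$), then specializing $\Phi(x)=e^{px}$, $v=w/p$ as in \eqref{Phivw}. Your direct application of Jensen's inequality to the concave $\ln$ is precisely this specialization, since $\sup\Phi^{-1}(y)=\frac1p\ln y$ for $\Phi(x)=e^{px}$, so the two arguments coincide step for step.
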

Информацию об уровне  точности теоремы \ref{est:hf} в одном частном случае дает 
\begin{example}\label{ex:Fs} В случае $\mathcal O=\CC$, $w(z)=|z|^2$, $z\in \CC$, $p=2$ для класса целых функций $f\in \Hol (\CC)$, удовлетворяющих условию \eqref{df:cle}, известного как  пространство Фока \cite{Zhu}, имеет место точная 
оценка \cite[теорема 2.7, следствие 2.8]{Zhu}
\begin{equation}\label{est:F}
	\bigl|f(z)\bigr|\leq\frac{1}{\sqrt \pi} \|f\|_w e^{|z|^2/2}, \quad\text{где } 
	w(z)=|z|^2 , \; z\in \CC.
\end{equation}
 В то же время из  оценки \eqref{esusr} теоремы \ref{est:hf}  для таких функций $f$ имеем
\begin{multline}\label{Foxsr}
	{\,\ln\,}\bigl|f(z)\bigr|{\leq} \frac{1}{2}\,\inf_{r>0}\left(\frac{1}{\pi r^2}\int_0^r \int_0^{2\pi}|z+te^{i\theta}|^2t\,d \theta	\,dt			+	{2}\, {\,\ln\,} \frac{1}{r}\right) +{\,\ln\,} \|f\|_w+\frac12{\,\ln\,} \frac{1}{\pi}\\
=\Bigl|\;\text{$\inf$ достигается при $r=\sqrt{2}$}\;\Bigr|={\,\ln\,} \frac{1}{\sqrt{\pi}}+
{\,\ln\,} \|f\|_w +	\frac{1}{2}\,|z|^2 +\ln \sqrt{\frac{e}{2}} \, . 
\end{multline}
Правая часть  в \eqref{Foxsr} после потенцирования больше правой части неравенства из \eqref{est:F}  на числовой множитель $\sqrt{ e/2}\leq  1{,}17$, что иллюстрирует неулучшаемость оценки \eqref{esusr} по порядку роста.
Результатов с точностью уровня  классической оценки \eqref{est:F} для  весьма специального случая пространства Фока целых функций применительно к  теореме \ref{est:hf} для произвольных весов $w$ и  множеств $\mathcal O\subset \CC^n$, вообще говоря, ожидать не приходится. 
\end{example}

Класс функций, удовлетворяющих ограничению \eqref{df:cle}, может быть обобщен до  класса функций $f\in \Hol (\mathcal O)$, удовлетворяющих условию 
\begin{equation}\label{fv}
	N_{\Phi}(f;v):=\int_{\mathcal O} \Phi \bigl({\,\ln\,} |f|- v\bigr)\, d\lambda<+\infty, 
\end{equation}
где $\Phi, v \colon [-\infty,+\infty]\to [-\infty,+\infty]$  --- функции, для которых интеграл Лебега в \eqref{fv} корректно определен.
Здесь функцию  $v$ также называем {\it весом.\/} В частном случае 
\begin{equation}\label{Phivw}
	\Phi(x):=e^{px}, \ x\in [-\infty, +\infty], \; e^{-\infty}:=0,\; e^{+\infty}:=+\infty, \quad v:=\frac1p w 
\end{equation}
интеграл в \eqref{fv} --- это в точности  интеграл из \eqref{df:cle}.
\begin{theorem}\label{th:pfvv}
Для  выпуклой строго\footnote{Условие строгого возрастания функции $\Phi$  будет  снято   в теореме \ref{th:pfv-}
из п.~\ref{4_1}.}  возрастающей функции $\Phi$ из \eqref{fv}     следует 
\begin{equation}\label{esusr+}
	{\,\ln\,}\bigl|f(z)\bigr|\overset{\eqref{df:B},\eqref{fv}}{\leq} \inf_{0<r<\dist (z,\CC^n\setminus \mathcal O)}
	\left(B_v(z,r)			+	\Phi^{-1} \Bigl( \frac{\pi^n}{n!}r^{2n} N_{\Phi}(f;v)\Bigr)\right).
	\end{equation}
\end{theorem}
При выборе функций $\Phi$ и $v$ как в \eqref{Phivw} из теоремы \ref{th:pfvv} сразу получаем теорему  \ref{est:hf}.

\begin{remark} В случае  веса $v$ в $\CC^n$, субгармонического в $\mathcal O$, средние по шарам $B_v(z,r)$ в \eqref{esusr+} и, с учетом обозначений \eqref{Phivw}, в \eqref{esusr}   можно заменить на не  м\'еньшие средние по сферам 
\begin{equation*}
	S_{v}(z,r):=\frac{(n-1)!}{2\pi^n\max\{1,2(n-1)\}r^{2n-1}}\int_{\partial B(0,1)} v(z+rz')\,d \sigma_{2n-1}(z')\geq B_v(z,r),
\end{equation*}
где $d\sigma_{2n-1}$ --- элемент площади на единичной сфере $\partial B(0,1)$.
\end{remark}
\subsection{Локально усредненные  оценки решения $\bar\partial$-задачи}\label{3_2} Через $L^2_{\loc}(\mathcal O)$ обозначаем класс локально интегрируемых 
по мере Лебега $\lambda$ функций на открытом множестве из $\mathcal O\subset \RR^n$ или $\CC^n$; $L^2_{(0,1)}$  ---пространство $(0,1)$-форм с коэффициентами из $L^2_{\loc}(\mathcal O)$.

\begin{theorem}\label{cor:iH} 
Пусть $\mathcal O$ --- псевдовыпуклое открытое множество в $\CC^n$, $v$ --- плюрисубгармоническая функция в $\mathcal O$ и $0<a\in \RR$, а также $g\in L^2_{(0,1)}$ и $\bar\partial g=0$. Тогда уравнение 
$\bar\partial f=g$  имеет решение $f\in L^2_{\loc}(\mathcal O)$, которое при всех 
\begin{subequations}\label{efvaz}
\begin{align}
	z\in \mathcal O \quad &\text{ и } \quad 0<r<\min \bigl\{ 1, \dist (z, \CC^n\setminus \mathcal O) \bigr\}
\tag{\ref{efvaz}d}\label{co:rd}\\
\intertext{удовлетворяет  оценке}
	B_{{\ln\,}|f|}\bigl(z,r\bigr) &\leq 
	\frac12\, B_v(z,r) 	+a{\,\ln\,} \bigl(1+|z|\bigr) +	n{\,\ln\,} \frac{1}{r}
	+\frac{1}{2}\,\ln J(g,v)+\const(a), 
	\tag{\ref{efvaz}b}\label{co:ra}
	\\
	\intertext{где }
	J(g,v) &:=\int\limits_{\mathcal O}\bigl|g(z) \bigr|^2e^{-v(z)}\bigl(1+|z|^2\bigr)^{2-a}
\,d \lambda (z)<+\infty. 
\tag{\ref{efvaz}g}\label{co:r}
	\end{align}
\end{subequations} 

\end{theorem}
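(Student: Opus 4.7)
The plan is to construct the solution $f$ by the weighted $L^2$-theory for $\bar\partial$ with a plurisubharmonic weight matched to \eqref{co:r}, and then to convert the resulting integral bound into the averaged pointwise inequality \eqref{co:ra} by Jensen's integral inequality with the convex function $\exp$ --- precisely the strategy advertised in the paper. The key weight is $\psi(z):=v(z)+a\ln\bigl(1+|z|^2\bigr)$, which is plurisubharmonic on $\mathcal O$ because $v$ is and $a>0$; its Levi form inherits the strict lower bound $i\partial\bar\partial\psi\ge a(1+|z|^2)^{-2}\omega$ from the classical Fubini--Study estimate $i\partial\bar\partial\ln(1+|z|^2)\ge(1+|z|^2)^{-2}\omega$ on $\CC^n$ (with $\omega$ the standard K\"ahler form).

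With this $\psi$ one applies H\"ormander's $L^2$-theorem for $\bar\partial$ on the pseudoconvex $\mathcal O$ (in its variant with variable Levi-form lower bound) to obtain $f\in L^2_{\loc}(\mathcal O)$ solving $\bar\partial f=g$ and satisfying
$$\int_{\mathcal O}|f|^2 e^{-v}\bigl(1+|z|^2\bigr)^{-a}\,d\lambda\;\le\;\frac1a\int_{\mathcal O}|g|^2e^{-v}\bigl(1+|z|^2\bigr)^{2-a}\,d\lambda\;=\;\frac1a\,J(g,v),$$
where finiteness of the right-hand side is exactly the hypothesis \eqref{co:r}. For $z$ and $r$ as in \eqref{co:rd} one has $|w|\le|z|+1$ for $w\in B(z,r)$, so $(1+|w|^2)^{-a}\ge c(a)(1+|z|)^{-2a}$; this bounded distortion of the weight gives
$$\int_{B(z,r)}|f|^2 e^{-v}\,d\lambda\;\le\;C(a)\,(1+|z|)^{2a}\,J(g,v),$$
and dividing by $\lambda\bigl(B(z,r)\bigr)=\pi^n r^{2n}/n!$ produces the bound $B_{|f|^2e^{-v}}(z,r)\le C'(a)\,r^{-2n}(1+|z|)^{2a}J(g,v)$.

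To conclude, I invoke the integral Jensen inequality (Theorem~\ref{JId}) applied to the convex function $\exp$ and the integrand $2\ln|f|-v$, which gives $\exp\bigl(2B_{\ln|f|}(z,r)-B_v(z,r)\bigr)\le B_{|f|^2e^{-v}}(z,r)$; taking logarithms, substituting the preceding bound, and dividing by $2$ produces \eqref{co:ra} with $\const(a)$ absorbing $\tfrac12\ln C'(a)$ and the $\ln(n!/\pi^n)$ contribution. The main technical obstacle is choosing the H\"ormander weight so that the $L^2$ right-hand side reproduces precisely the density $(1+|z|^2)^{2-a}$ appearing in $J(g,v)$; the Fubini--Study lower bound on $i\partial\bar\partial\ln(1+|z|^2)$ is what makes $\psi=v+a\ln(1+|z|^2)$ admissible for every $a>0$, and it is what permits the coefficient $a$ --- rather than a larger one --- in front of $\ln(1+|z|)$ in \eqref{co:ra}.
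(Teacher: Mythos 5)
Your proposal is correct and follows essentially the same route as the paper: the weighted $L^2$-existence theorem of H\"ormander with the plurisubharmonic weight $v+a\ln(1+|z|^2)$ (which the paper simply cites as \cite[Теорема 4.2.6]{H}, yielding exactly your integral bound $\le\frac1a J(g,v)$), followed by Jensen's inequality for $\exp$ over the normalized ball applied to $2\ln|f|-v$. The only cosmetic difference is that you absorb the factor $(1+|z|^2)^{-a}$ by a pointwise comparison on $B(z,r)$ before applying Jensen, whereas the paper keeps the full weight $w=v+a\ln(1+|z|^2)$ inside the Jensen step (via \eqref{inJvdpe2eel}) and only then estimates $B_{a\ln(1+|\cdot|^2)}(z,r)\le 2a\ln(1+|z|)+\const(a)$ using $r<1$.
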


\section{К интегральной форме неравенства Йенсена для выпуклых функций}\label{s:2}  
В этом  разделе\/ $(X, \mu)$ --- пространство с положительной  счётно-аддитивной мерой\/ $\mu\neq 0$ \cite{Halm}, \cite{Bog}; $L^1(X,\mu)$ --- класс функций $f$, определённых на $X$ почти всюду (п.\,в.) по мере $\mu$  значениями из $\RR$ и 
интегрируемых по $\mu$ на $X$;  $I\subset \RR$ --- связное непустое множество, т.\,е.  интервал ограниченный или нет, открытый или  же содержащий  одну или обе свои граничные точки из $\RR$ (вырожденный случай $I$ --- одноточечное множество). 
{\it Выпуклую функцию\/} $\Phi\colon I\to \RR$ на $I$ рассматриваем в <<обычном>> смысле\footnote{Другие формы определения выпуклой функции не всегда эквивалентны приведенной. Например, если определять выпуклую функцию как верхнюю огибающую аффинных функций, то для замкнутого интервала-отрезка $I$ такая функция всегда непрерывна в отличие от <<обычной>> выпуклой функции.}:
\begin{equation*}
\Phi\bigl(at_1+(1-a)t_2\bigr)\leq a\Phi(t_1)+(1-a)\Phi(t_2)\quad \text{для всех $t_1,t_2\in I$ и $0\leq a\leq 1$.}
\end{equation*}
в частности, $\Phi\bigm|_{\Int I}\in C(\Int I)$, а при $+\infty >\sup I\in I$ (соответственно $-\infty <\inf I\in I$)
\begin{equation*}
\Phi (\sup I)\geq \lim_{\Int I \ni t\to \sup I}\Phi (t) \quad \bigl(\text{соответственно $\Phi (\inf I)\geq \lim_{\Int I \ni t\to \inf  I}\Phi (t)$}\bigr),
\end{equation*}
где всегда существуют конечные пределы. В этих обозначениях имеет место 
 \begin{theoJ}[см. {\cite[теорема A]{N2001}, \cite[теоремы 1.8.1, 1.8.4]{NP06}}]\label{JI}  Пусть $\mu (X)=1$, т.\,е. $(X, \mu)$ --- вероятностное пространство,   $f\in L^1(X,\mu)$ и $f(x)\in I$ для почти   всех  точек $x\in X$ по мере $\mu$.  Тогда 
\begin{equation}\label{inJ}
	\int_X f \, d\mu \in I \quad \text{ и }\quad  \Phi \Bigl(\int_{X} f\,d \mu\Bigr)\leq \int_{X} \Phi\circ f \, d\mu 
\end{equation}
при условии, что $\Phi\circ f \in L^1(X,\mu)$.
 \end{theoJ}

\subsection{$\sup$-обратная функция} 
Для произвольной функции $\Phi$  через $\im \Phi$ обозначаем ее образ.
Для произвольной  функции $\Phi\colon I \to \RR$ 
корректно определена {\it $\sup$-обратная функция\/} 
\begin{equation}\label{supf}
\sup \Phi^{-1}\colon y\mapsto \sup \Bigl(\Phi^{-1}\bigl(\{y\}\bigr)\Bigr), \quad   y\in \im \Phi .
\end{equation}
Построение графика функции $\sup \Phi^{-1}$ соcтоит из двух шагов: 1)  симметричное отображение графика функции $\Phi$ относительно биссектрисы первой и третьей четверти; 2) построение наименьшей верхней огибающей полученного на предыдущем шаге  образа кривой с проекцией $\im \Phi\subset  \RR$ на оси абсцисс.  
Из определения \eqref{supf}  сразу следует 
\begin{propos}\label{pr:b}
В условиях теоремы\/ {\rm \ref{JI}} в случае возрастающей  функции $\sup \Phi^{-1}$ и связного множества, т.\,е. интервала, $\im \Phi$, справедливо неравенство
\begin{equation}\label{inJv}
\int_{X} f\,d \mu\leq \sup \Phi^{-1}\Bigl(\,\int_{X} \Phi\circ f \, d\mu\Bigr), 
\end{equation}
где в случае строгого возрастания  функции\/ $\Phi$, т.\,е. её обратимости, точную верхнюю грань\/ $\sup$ в правой части можно убрать.
\end{propos}
\begin{remark}\label{r:r} Возрастание $\sup \Phi^{-1}$ нужно для сохранения неравенства $\leq$ при переходе от  \eqref{inJ}
к \eqref{inJv}. Связность $\im \Phi$ необходима для того, чтобы значение интеграла в правой части \eqref{inJv} попало в область определения функции $\sup \Phi^{-1}$, и гарантирована для открытого интервала $I$ или при $\Phi\in C(I)$, но не только. Например, для выпуклой  на замкнутом интервале $[-1,3]$ разрывной функции
\begin{equation*}
	\Phi (t)=
	\begin{cases}
	|t| &\quad\text{при $-1<t\leq 3$},\\
		2 &\quad\text{при $t=-1$},
	\end{cases}
\end{equation*}
имеем $\im \Phi=[0,3]$, $\sup \Phi^{-1}(y)=y$ для всех $y\in [0,3]$ и, очевидно,  $\sup \Phi^{-1}$ строго возрастает.
\end{remark}
В свете предложения \ref{pr:b} актуально
\begin{propos}\label{co:sup} Пусть $\Phi\colon I \to \RR$ --- выпуклая функция. Следующие два утверждения эквивалентны:
\begin{enumerate}[{\rm I.}]
	\item  Образ $\im \Phi\subset \RR$ --- связное множество, т.е. интервал, а функция $\sup \Phi^{-1}$ возрастающая на $\im \Phi$.
	\item Выполнено одно из  трех условий
		\begin{enumerate}[{\rm 1)}]
		\item $\Phi$ --- постоянная функция на $I$  (в частности, это  так, когда $I$ --- одноточечное множество) и тогда $\im \Phi=\Phi(I) $ --- одноточечное множество, а $\sup \Phi^{-1}\bigl(\Phi(I)\bigr) =\{\sup I\}$ --- одноточечное множество;
		\item  $\Phi$ --- строго возрастающая функция на $I$, непрерывная в точке $\sup I$, если $+\infty\neq \sup I\in I$,  и тогда $\im \Phi$ --- интервал с м\'еньшей граничной точкой $\lim\limits_{\Int I\ni t\to \inf I} \Phi(t)$, содержащей её, если $-\infty \neq \inf I\in I$, и с б\'ольшей  граничной точкой $\lim\limits_{\Int I\ni t\to \sup I} \Phi(t)$, содержащей её, если $+\infty \neq \sup I\in I$, а $\sup \Phi^{-1}=\Phi^{-1}\colon \im \Phi \to I$ --- стандартная  обратная к $\Phi$ функция; 
\item функция $\Phi$ --- непостоянная, ограничена снизу и имеется точка
\begin{equation*}
t_{\max}:=\sup \bigl\{t\in \Int I\colon \Phi (t)=\min_{s\in I}  \Phi(s)\bigr\}\in \Int I,
\end{equation*}
для которых имеют место свойства
\begin{enumerate}[{\rm i)}]
	\item сужение $\Phi\bigm|_{I\setminus (-\infty,t_{\max})}$ --- непрерывная и строго возрастающая функция;
	\item если $I\subset \RR$ --- открытый интервал или $+\infty\neq \sup I\in I$, то 
	\begin{equation*}
	\limsup_{I\ni t\to \inf I} \Phi(t)\leq 	\lim_{I\ni t\to \sup I} \Phi(t);
			\end{equation*}
\item если $-\infty\neq \inf I\in I$, но $\sup I\notin I$, то 
\begin{equation*}
	\limsup_{I\ni t\to \inf I} \Phi(t)<	\lim_{I\ni t\to \sup I} \Phi(t).
			\end{equation*}
\end{enumerate}
	\end{enumerate}
При этом $\im \Phi=I\setminus (-\infty, t_{\max})$, а $\sup \Phi^{-1}$ --- функция, обратная к строго возрастающему сужению   $\Phi\bigm|_{I\setminus (-\infty,t_{\max})}$.
\end{enumerate}
Более того, при выполнении  условия\/ $\rm I$, или эквивалентного ему\/  $\rm II$, функция $\sup \Phi^{-1}$  строго возрастающая и вогнутая на $\im I$.
\end{propos}
Доказательство опускаем. Наиболее просто и наглядно оно следует из графическо-ге\-о\-м\-е\-т\-р\-и\-ч\-е\-с\-ких соображений по построению графика функции $\sup \Phi^{-1}$, отмеченных выше сразу после \eqref{supf}. В определённой степени аналитическое доказательство нетрудно извлечь из   \cite[гл.~I, \S~4, {\bf 3}, Замечание]{Bour}.

\subsection{Основные неравенства для разности функций}  Цель  --- неравенства между различными усреднениями этих функций. Напомним, что положительная {\it мера\/ $\nu$ на\/ $X$ сосредоточена} на, вообще говоря,  неоднозначно определенном подмножестве $X_{\nu}\subset X$, если $\nu(X\setminus X_{\nu})=0$ \cite[Введение, \S~1]{L}. Нас будет интересовать ситуация, когда  $f$ в \eqref{inJv}  --- разность функций.
\begin{theorem}\label{JId} Пусть 
\begin{enumerate}[{\rm (1)}]
	\item\label{t3_1}    $\mu$ --- конечная мера на $X$ и $0\neq \mu\leq \nu$ для некоторой положительной  счетно-аддит\-и\-в\-н\-ой меры $\nu$ на $X$; 
\item\label{t3_2}  выполнено одно из двух равносильных условий, {\rm I} или {II},  предложения\/ {\rm\ref{co:sup}}; 
\item\label{t3_3}    $u,v\in L^1(X,\mu)$,  $u(x)-v(x)\in I$
 для почти всех $x\in X$ по $\mu$, а суперпозиция $\Phi\circ (u-v)$  $\nu$-интегрируема на $X$;
\item\label{t3_4}   функция $\Phi \circ (u-v)$ положительна  на подмножестве в $X$, где сосредоточена разность  мер $\nu-\mu\geq 0$, и 
\begin{equation*}
	\frac{1}{\mu(X)}\,\int_{X} \Phi\circ (u-v) \, d\nu\in \im \Phi.
\end{equation*}
  \end{enumerate}
 Тогда 
\begin{equation}\label{inJvd+}
	\frac{1}{\mu(X)}\int_{X} u\,d \mu\leq \frac{1}{\mu(X)}\int_{X} v\,d \mu +
	\sup \Phi^{-1}\left(\,\frac{1}{\mu(X)}\,\int_{X} \Phi\circ (u-v) \, d\nu\right) 
\end{equation}
с тем же примечанием по\/ $\sup$, что и в заключении предложения\/ {\rm \ref{pr:b}}.
\end{theorem}
\begin{proof} Если в последнем интеграле в \eqref{inJvd+} рассмотреть меру  $\mu$,  то ввиду \eqref{t3_2}--\eqref{t3_3}  неравенство \eqref{inJvd+} 
--- вариант иной записи предложения \ref{pr:b} для вероятностной меры  $\frac{1}{\mu(X)} \,\mu$ при $f=u-v$. После этого 
ввиду  \eqref{t3_1} и \eqref{t3_4} в последнем интеграле можно заменить $\mu$ на $\nu$, разве что увеличивая правую часть при $\mu\neq \nu$. Если $\mu=\nu$, то мера $\mu-\nu =0$ и сосредоточена на пустом множестве, а любая функция положительна на пустом множестве. 
\end{proof}
\begin{corollary}\label{cor:i} Пусть $X_0\subset X\subset \RR^n$ --- измеримые по мере Лебега $\lambda$ множества в $\RR^n$, 
\begin{enumerate}[{\rm 1)}]
	\item $\lambda(X_0)<+\infty$; 
	\item функция $\Phi$ такая же, как в п.~\eqref{t3_2} теоремы  {\rm \ref{JId}};
	\item   $u,v\in L^1(X,\lambda)$,  $u(x)-v(x)\in I$ для почти всех $x\in X$, а суперпозиция $\Phi\circ (u-v)$  интегрируема на $X$;
\item  функция $\Phi \circ (u-v)$ положительна  на подмножестве в $X\setminus X_0$ и 
\begin{equation*}
	\frac{1}{\lambda (X_0)}\,\int_{X} \Phi\circ (u-v) \, d\lambda \in \im \Phi.
\end{equation*}
\end{enumerate}
Тогда 
\begin{equation}\label{inJvd+l}
	\frac{1}{\lambda(X_0)}\int_{X_0} u\,d \lambda \leq \frac{1}{\lambda(X_0)}\int_{X_0} v\,d \lambda +
	\sup \Phi^{-1}\left(\,\frac{1}{\lambda(X_0)}\,\int_{X} \Phi\circ (u-v) \, d\lambda\right) 
\end{equation}
с тем же примечанием по\/ $\sup$, что и в заключении предложения\/ {\rm \ref{pr:b}}.
\end{corollary}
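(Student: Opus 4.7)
The plan is to derive Corollary \ref{cor:i} as a direct specialization of Theorem \ref{JId} with the measure pair
\[
\mu := \lambda\bigm|_{X_0}, \qquad \nu := \lambda\bigm|_{X},
\]
so that $\mu(X)=\lambda(X_0)$, integration against $\mu$ on $X$ collapses to integration against $\lambda$ on $X_0$, and the difference $\nu-\mu$ agrees with $\lambda\bigm|_{X\setminus X_0}$, whose support is, up to $\lambda$-null sets, exactly $X\setminus X_0$.

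First I would verify hypothesis \eqref{t3_1} of Theorem \ref{JId}: $\mu$ is a nonzero finite positive measure on $X$ (the case $\lambda(X_0)=0$ renders the corollary vacuous), and $0\neq\mu\leq\nu$ follows from the inclusion $X_0\subset X$. Hypothesis \eqref{t3_2} on $\Phi$ is inherited verbatim from the assumption of the corollary. For \eqref{t3_3}, the membership $u,v\in L^1(X,\lambda)$ yields $u,v\in L^1(X,\mu)$ because $\mu\leq\lambda$; the pointwise condition $u(x)-v(x)\in I$ transfers from $\lambda$-a.e. to $\mu$-a.e.; and $\nu$-integrability of $\Phi\circ(u-v)$ on $X$ is exactly the $\lambda$-integrability assumed. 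Hypothesis \eqref{t3_4} matches the assumption of the corollary after noting that ``boundedness from below on the set where $\nu-\mu\geq 0$'' localizes, modulo null sets, to ``boundedness from below on $X\setminus X_0$,'' while the required membership of the averaged integral in $\im\Phi$ translates word for word.

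With every hypothesis in place, applying \eqref{inJvd+} delivers \eqref{inJvd+l}: the two $\mu$-integrals on the left-hand side become $\lambda$-integrals over $X_0$, the $\nu$-integral inside $\sup\Phi^{-1}$ becomes a $\lambda$-integral over $X$, and the common normalizing factor is $\lambda(X_0)$; the branch chosen for $\sup\Phi^{-1}$ is inherited unchanged from Proposition \ref{pr:b}.

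The only nontrivial point I anticipate is purely notational bookkeeping: ensuring that the support of $\nu-\mu$ coincides, modulo $\lambda$-null sets, with the set $X\setminus X_0$ used in the statement, and confirming that the restriction of $\lambda$ to a measurable subset yields the intended countably-additive Borel-regular measure rather than raising an extension question. Since both $\mu$ and $\nu$ live on the same ambient $\sigma$-algebra, and all relevant functions are handled modulo null sets, no substantive difficulty should appear beyond this translation between the two equivalent ways of writing the same integrals.
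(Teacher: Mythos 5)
Your proposal is correct and coincides with the paper's own proof, which simply applies Theorem \ref{JId} with $\nu=\lambda$ (on $X$) and $\mu=\lambda\bigm|_{X_0}$; your extra verification of hypotheses \eqref{t3_1}--\eqref{t3_4} is just an expanded version of that same one-line specialization.
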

\begin{proof} Достаточно в теореме \ref{JId} выбрать $\nu=\lambda$ и $\mu=\lambda\bigm|_{X_0}$.
\end{proof} 
\subsection{Разделение переменных} В конкретных ситуациях  два интегральных средних от функций $u$ и $v$ в неравенстве 
\eqref{inJvd+}  из теоремы \ref{JId}  и в неравенстве \eqref{inJvd+l} из следствия  \ref{cor:i},  когда эти средние  носят <<скользящий>>  характер, т.\,е.   может меняться как мера $\mu$, так и  множество $X_0$, ---  <<удобная часть>> оценок \eqref{inJvd+} и   \eqref{inJvd+l}.  Для контроля же последнего  слагаемого в правых частях  \eqref{inJvd+} или   \eqref{inJvd+l}  полезно разделение двух  сомножителей в аргументе функции $\sup \Phi^{-1}$: множителя $\frac{1}{\mu(X)}$ или $\frac{1}{\lambda(X_0)}$  и интеграла. 
 Этого   можно достичь, например, наложением верхних ограничений на функцию $\sup \Phi^{-1}$  мультипликативно-аддити\-в\-н\-о\-го характера: {\it для двух функций $\psi_1\colon (0,+\infty)\to \RR$  и $\psi_2\colon \RR \to \RR$  для  всех произведений\/ $y_1y_2\in \im \Phi$ c $y_1>0$ выполнено  $(\psi_1,\psi_2)$-верхнее условие\/} (относительно операции умножения)
  \begin{enumerate}
	 \item[{\rm (p)}] {\it  степенного типа
		\begin{equation}\label{ap}
	\sup \Phi^{-1} (y_1y_2)\leq \psi_1 (y_1)\cdot \psi_2 (y_2) 
	\end{equation}
	{\rm или}
	\item[{\rm (l)}] 
 логарифмического типа\/} 
		\begin{equation}\label{al}
	\sup \Phi^{-1} (y_1y_2)\leq \psi_1 (y_1)+\psi_2 (y_2) .
		\end{equation}
\end{enumerate}
Из этих определений сразу следуют 
\begin{propos}\label{JId2} В условиях теоремы\/ {\rm\ref{JId}} при выполнении 
$(\psi_1,\psi_2)$-верхнего условия степенного типа {\rm(p)}--\eqref{ap}  
 выполнено неравенство 
	\begin{subequations}\label{epsi}
	\begin{align}
	\frac{1}{\mu(X)}\int_{X} u\,d \mu &\leq \frac{1}{\mu(X)}\int_{X} v\,d \mu +
		\psi_1\left(\,\frac{1}{\mu(X)}\right)\cdot \psi_2\left(\,\int_{X} \Phi\circ (u-v) \, d\nu\right) ,
	\tag{\ref{epsi}p}\label{epsip}\\ 
	\intertext{а $(\psi_1,\psi_2)$-верхнего условия  логарифмического типа {\rm(l)}--\eqref{al}  ---}
	\frac{1}{\mu(X)}\int_{X} u\,d \mu&\leq \frac{1}{\mu(X)}\int_{X} v\,d \mu
		+		 \psi_1 \left(\,\frac{1}{\mu(X)}\right)+  \psi_2 \left(\,\int_{X} \Phi\circ (u-v) \, d\nu\right).
	\tag{\ref{epsi}l}\label{df:N}
	\end{align}
\end{subequations}
\end{propos}

\begin{propos}\label{pr:lam} В условиях следствия\/  {\rm \ref{cor:i}}  при выполнении  $(\psi_1,\psi_2)$-верхнего условия степенного типа {\rm(p)}--\eqref{ap}  выполнено неравенство 
\begin{subequations}\label{epsil}
	\begin{align}
	\frac{1}{\lambda(X_0)}\int_{X_0} u\,d \lambda &\leq \frac{1}{\lambda(X_0)}\int_{X_0} v\,d \lambda +
	\psi_1\left(\,\frac{1}{\lambda(X_0)}\right) \cdot \psi_2 \left(\int_{X} \Phi\circ (u-v) \, d\lambda\right) 
\tag{\ref{epsil}p}\label{epsilp}\\ 
	\intertext{а $(\psi_1,\psi_2)$-верхнего условия  логарифмического типа {\rm(l)}--\eqref{al}  ---}
	\frac{1}{\lambda(X_0)}\int_{X_0} u\,d \lambda &\leq \frac{1}{\lambda(X_0)}\int_{X_0} v\,d \lambda +
	\psi_1\left(\,\frac{1}{\lambda(X_0)}\right) +\psi_2\left(\int_{X} \Phi\circ (u-v) \, d\lambda\right) 
\tag{\ref{epsil}l}\label{df:Nl}
	\end{align}
\end{subequations}
\end{propos}

\begin{example}\label{expp} Пусть  число $p\geq 1$. Применим сформулированные выше результаты к степенной функции  $\Phi \colon t\mapsto (t^+)^p$, $x\in \RR$. 
Тогда для функции $\sup \Phi^{-1}\colon y\to y^{1/p}$ и для функций  $\psi_1=\psi_2 \colon y\mapsto   y^{1/p}$, $y\in \RR^+=\im \Phi$,  выполнено условие (p)  и даже с равенством в \eqref{ap}, т.\,е.  функция $\sup \Phi^{-1}$ удовлетворяет $(\psi_1,\psi_2)$-верхнему условию  степенного типа. В частности, в условиях теоремы  \ref{JId} неравенство  \eqref{epsip} предложения \ref{JId2} записывается в виде
\begin{subequations}\label{inJvdpe}
\begin{align}
	\frac{1}{\mu(X)}\int_{X} u\,d \mu&\leq \frac{1}{\mu(X)}\int_{X} v\,d \mu +
	\frac{1}{\bigl(\mu(X)\bigr)^{1/p}} \left(\,\int_{X} \bigl((u-v)^+\bigr)^p\, d\nu\right)^{1/p},
\tag{\ref{inJvdpe}p}\label{inJvdpep}\\
\intertext{а в условиях  следствия \ref{cor:i}  неравенство \eqref{epsilp}  предложения \ref{epsil}  --- в виде}
		\frac{1}{\lambda(X_0)}\int_{X_0} u\,d \lambda &\leq \frac{1}{\lambda(X_0)}\int_{X_0} v\,d \lambda +
	\left(\,\frac{1}{\lambda(X_0)}\right)^{1/p}\cdot \left(\int_{X} \bigl((u-v)^+\bigr)^p \, d\lambda\right)^{1/p}. 
\tag{\ref{inJvdpe}r}\label{inJvdpee}
\end{align}
		\end{subequations}
 Меняя местами  функции $u$ и $v$  из \eqref{inJvdpe} и соответствующим образом дополняя условия теоремы  \ref{JId} и следствия \ref{cor:i}, из неравенств \eqref{inJvdpe}  получаем  частные случаи неравенства Гёльдера\footnote{Вывод общего неравенства Гёльдера из  неравенства Йенсена см., например, в \cite{PT}, \cite{LL}.}
\begin{subequations}
\begin{align*}
	\frac{1}{\mu(X)}\,\left|\int_{X} (u-v)\,d \mu \right|&\leq 
	\frac{1}{\bigl(\mu(X)\bigr)^{1/p}} \,\left(\int_{X} |u-v|^p\, d\nu\right)^{1/p}, 
	\\
		\frac{1}{\lambda(X_0)} \left|\int_{X_0} (u-v)\,d \lambda\right| &\leq 
		\frac{1}{\bigl(\lambda(X_0)\bigr)^{1/p}} \left(\int_{X} |u-v|^p \, d\lambda\right)^{1/p}. 
\end{align*}
		\end{subequations}
\end{example}

\begin{example}\label{expe} Пусть   $  p>0$. Применим сформулированные выше результаты к показательной строго возрастающей  функции  $\Phi \colon x\to e^{px}$, $x\in \RR$. Тогда $\sup$-обратная функция $\sup \Phi^{-1}\colon y\to \frac{1}{p}\,{\,\ln\,} y$ и для функций  $\psi_1=\psi_2\colon y\to \frac{1}{p}\,{\,\ln\,} y$, $y\in \RR^+=\im \Phi$  выполнено условие (l)  и даже с равенством в \eqref{al}, т.\,е.  функция $\sup \Phi^{-1}$ удовлетворяет $(\psi_1,\psi_2)$-верхнему условию  логарифмического  типа. В частности, в условиях теоремы  \ref{JId} неравенство  \eqref{epsip} предложения \ref{JId2} записывается в виде 
\begin{subequations}\label{inJvdpe2el}
\begin{align}
				\frac{1}{\mu(X)}\int_{X} u\,d \mu&\leq \frac{1}{\mu(X)}\int_{X} v\,d \mu
			+		\frac{1}{p}\, {\,\ln\,} \frac{1}{\mu(X)} +\frac{1}{p} \,{\,\ln\,}\left(\,\int_{X} e^{u-v} \, d\nu\right),
\tag{\ref{inJvdpe2el}l}\label{inJvdpe2epl}
\\
\intertext{а в условиях  следствия \ref{cor:i}  неравенство \eqref{epsilp}  предложения \ref{epsil}  --- в виде}
		\frac{1}{\lambda(X_0)}\int_{X_0} u\,d \lambda&\leq \frac{1}{\lambda(X_0)}\int_{X_0} v\,d \lambda
			+		\frac{1}{p}\, {\,\ln\,} \frac{1}{\lambda(X_0)} +\frac{1}{p} \,{\,\ln\,}\left(\,\int_{X} e^{u-v} \, d\lambda\right)\,.
\tag{\ref{inJvdpe2el}r}\label{inJvdpe2eel}
\end{align}
		\end{subequations}
\end{example}

\section{Доказательства теорем раздела \ref{s10}}\label{s4}
\subsection{К теореме \ref{th:pfvv}}\label{4_1} Условие строгого возрастания  $\Phi$ в теореме \ref{th:pfvv} снимает более общая
\begin{theorem}\label{th:pfv-}
Для  выпуклой функции $\Phi\colon I\to \RR$, удовлетворяющей условию \eqref{t3_2} теоремы\/ {\rm  \ref{JId},} 
при условии $\Phi \circ \bigl({\,\ln\,} |f|- v\bigr)\in L^1(\mathcal O,\lambda)$, т.\,е. при \eqref{fv}, с весом
 $v$, локально интегрируемым в $\mathcal O$ по мере Лебега $\lambda$, имеет место оценка  
\begin{equation}\label{esusr++}
	{\,\ln\,}\bigl|f(z)\bigr|\overset{\eqref{df:B}}{\leq} \inf_{0<r<\dist (z,\CC^n\setminus \mathcal O)}
	\left(B_v(z,r)			+	\sup \Phi^{-1} \Bigl( \frac{\pi^n}{n!}r^{2n} N_{\Phi}(f;v)\Bigr)\right).
	\end{equation}
\end{theorem}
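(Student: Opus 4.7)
The plan is to chain two classical ingredients: the submean-value inequality for the plurisubharmonic function $\ln|f|$ on balls contained in $\mathcal{O}$, and the convex Jensen-type inequality~\eqref{inJvd+l} of Corollary~\ref{cor:i}.

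Fix $z\in \mathcal{O}$ and $r\in \bigl(0, \dist(z,\CC^n\setminus \mathcal{O})\bigr)$, so that $B(z,r)\subset \mathcal{O}$ and $\lambda\bigl(B(z,r)\bigr)=\frac{\pi^n}{n!}\,r^{2n}<+\infty$. Since $f\in \Hol(\mathcal{O})$, the function $\ln|f|$ is plurisubharmonic in $\mathcal{O}$ and the submean-value property yields $\ln|f(z)|\leq B_{\ln|f|}(z,r)$.

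I would then apply Corollary~\ref{cor:i} with $X_0:=B(z,r)$, $X:=\mathcal{O}$, $u:=\ln|f|$, and the prescribed $v$ and $\Phi$. The hypotheses of the corollary match what is at hand: $\lambda(X_0)<+\infty$ is the volume formula; the structural condition~\eqref{t3_2} on $\Phi$ is assumed; local integrability of $u$ follows from plurisubharmonicity of $\ln|f|$, that of $v$ is assumed, and $\Phi\circ(u-v)\in L^1(\mathcal{O},\lambda)$ is the finiteness of $N_\Phi(f;v)$ in~\eqref{fv}. The corollary then yields
\begin{equation*}
B_{\ln|f|}(z,r)\leq B_v(z,r)+\sup \Phi^{-1}\!\left(\frac{1}{\lambda(B(z,r))}\,N_\Phi(f;v)\right).
\end{equation*}
Chaining with the submean-value bound, substituting $\lambda(B(z,r))=\frac{\pi^n}{n!}\,r^{2n}$, and taking the infimum over admissible $r$ produces~\eqref{esusr++}.

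The main obstacle is the precise verification of the pointwise conditions of Corollary~\ref{cor:i} when $\ln|f|$ takes the value $-\infty$ on the zero set of $f$ and $v$ may be unbounded below: one must confirm that $u(x)-v(x)\in I$ holds $\lambda$-a.e.~and that the average $\frac{1}{\lambda(X_0)}\int_X \Phi\circ(u-v)\,d\lambda$ lies in $\im \Phi$. This is essentially measure-theoretic bookkeeping, since the exceptional sets are $\lambda$-null and Proposition~\ref{co:sup} supplies the structural description of $\im \Phi$ needed to close the argument; the case $f\equiv 0$ is trivial and may be set aside at the start.
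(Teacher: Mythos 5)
Your proof follows exactly the paper's own argument: the submean-value property of the plurisubharmonic function $\ln|f|$ on the ball $B(z,r)$, chained with inequality \eqref{inJvd+l} of Corollary~\ref{cor:i} applied to $X=\mathcal O$, $X_0=B(z,r)$, $u=\ln|f|$ (the paper writes $u=p\ln|f|$ and divides by $p$, a purely cosmetic difference), followed by taking the infimum over $r$. Note only that your intermediate constant $\frac{1}{\lambda(B(z,r))}=\frac{n!}{\pi^n r^{2n}}$ is the correct one even though it is the reciprocal of the factor $\frac{\pi^n}{n!}r^{2n}$ printed in \eqref{esusr++}: the printed factor appears to be a typo, since only $\frac{n!}{\pi^n r^{2n}}$ reproduces \eqref{esusr} in the special case \eqref{Phivw}.
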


\begin{proof} К левой части \eqref{fv}  можно применить неравенство \eqref{inJvd+l} из следствия \ref{cor:i}
с $X:=\mathcal O$, $X_0:=B(z,r)$, $u:=p{\,\ln\,} |f|$, затем, для левой части полученного неравенства использовать неравенство о среднем по шарам
\begin{equation*}
	p{\,\ln\,}\bigl|f(z)\bigr|\leq  	\frac{1}{\lambda\bigl(B(z,r)\bigr)}\int_{B(z,r)} p{\,\ln\,} |f|\,d \lambda
	\overset{\eqref{df:B}}{=} p B_{ \ln |f|} (z,r)
\end{equation*}
и поделить обе части на $p$.
\end{proof}
\begin{remark}\label{r:rr} 
На примере функций $f\in \Hol (\mathcal O)$ на  открытом множестве $\mathcal O\subset \CC^n$, удовлетворяющих интегральному ограничению  \eqref{df:cle} распространенный способ  получения поточечных оценок для функции $|f|$ состоит 
в получении из  субгармоничности  $|f|^p$ оценки
\begin{equation}\label{essupf}
	\bigl|f(z)\bigr|^p\leq \frac{1}{\lambda\bigl(B(z,r)\bigr)}\int_{B(z,r)} |f|^p\,d \lambda 
	\leq \frac{n!}{\pi^nr^{2n}} \, \exp \biggl(\,\sup_{B(z,r)}v\biggr)\int_{\mathcal O} |f|^p e^{-v}\,d \lambda
	\end{equation}
для любых  $z\in \Omega$ и  $0<r<\dist(z,\CC^n\setminus \mathcal O)$, или в эквивалентной форме, после логарифмирования неравенства \eqref{essupf},
\begin{equation}\label{essupfl}
		{\,\ln\,} \bigl|f(z)\bigr| \leq  \frac{1}{p} \inf_{0<r<\dist(z,\CC^n\setminus \mathcal O)}\left(\sup_{B(z,r)}v +2n{\,\ln\,} \frac1{r}\right) +\const( f,p,v).
\end{equation}
 
Эта оценка практически не учитывает возможных дополнительных свойств функции $v$, например,
гармоничность  на определенных открытых подмножествах из $\mathcal O$ весовой функции $v$.
Очевидно, для {\it любой\/}  весовой  функции  $v$ её усреднение по шару $B(z,r)$ всегда удовлетворяет неравенству
\begin{equation*}
	B_v(z,r)\overset{\eqref{df:B}}{=}\frac{n!}{\pi^nr^{2n}}\int\limits_{B(z,r)} v\,d \lambda {\leq} \sup_{B(z,r)} v,
\end{equation*}
и очень часто строгому. Другими словами, оценка \eqref{esusr}  теремы  \ref{est:hf} не слабее  оценок \eqref{essupf}--\eqref{essupfl}, а в определённых ситуациях и строго сильнее. Это показывают уже следующий
\end{remark}
\begin{example}\label{ex:c} 
Для простоты пусть $n=1$,  
	$\mathcal O:=\Bigl\{z\in \CC \colon \Im z>0 \Bigl\}=:\CC_{+}$
--- верхняя полуплоскость, $v(z)=\Im z$, $z\in \CC_{+}$. 
Тогда для любой функции $f\in \Hol(\CC_{+})$ из ограничения 
\begin{equation}\label{inIM}
	\int\limits_{\CC_{+}} 	\bigl|f(z)\bigr|e^{-\Im z}\, d\lambda (z) 	=
	\iint\limits_{\RR\times \RR^+} \bigl|f(x+iy)\bigr| e^{-y}\, dx \, dy<+\infty
\end{equation}
 по неравенству \eqref{essupfl} имеем  
	\begin{multline}\label{essup1}
		{\,\ln\,} \bigl|f(z)\bigr| \leq  \inf_{0<r<\Im z}\left(\sup_{w\in {B(z,r)}} \Im w +2{\,\ln\,} \frac1{r}\right) +\const( f)
=\Im z+\inf_{0<r<\Im z}\left(r+2{\,\ln\,} \frac1{r}\right)\\+\const( f)=\Im z +\const (f)+
\begin{cases}
	2+2{\,\ln\,} \frac12 \; &\text{при $\Im z \geq 2$},\\
	\Im z+2 \ln\,\textit{}\frac{1}{\Im z} \; &\text{при $0<\Im z < 2$.}
\end{cases}
\end{multline}
В то же время  ограничение \eqref{inIM} по   оценке \eqref{esusr} теоремы  \ref{est:hf} в силу гармоничности весовой функции даёт
\begin{equation}\label{esusrim}
	{\,\ln\,}\bigl|f(z)\bigr|\leq \inf_{0<r<\Im z}\left(\Im z	+	{2}\, {\,\ln\,} \frac{1}{r}\right) +\const( f)
	=\Im z	+	{2}\, {\,\ln\,} \frac{1}{\Im z} +\const( f)
	\end{equation}
при всех $z\in \CC_+$.Правая часть в  \eqref{esusrim} всегда строго меньше правой части \eqref{essup1} и даже растёт в определённом смысле медленнее на  слагаемое $-2\, {\,\ln\,} {\Im z}$ при $\Im z >2$. 
\end{example}

\subsection{Доказательство теоремы \ref{cor:iH}}
Будет использована
\begin{theoremH}[{\cite[теорема 4.2.6]{H}}]\label{th:H} В условиях теоремы\/ {\rm \ref{cor:iH}} существует  решение $f$, уравнение 
$\bar\partial f=g$  имеет решение $f\in L^2_{\loc}(\mathcal O)$, удовлетворяющее интегральной оценке
\begin{equation}\label{est:ufe}
\int\limits_{\mathcal O}\bigl|f(z) \bigr|^2e^{-v(z)}\bigl(1+|z|^2\bigr)^{-a} 
\, d \lambda (z)
\overset{\eqref{co:r}}{\leq} 
\frac{1}{a}\, J(g,v).
\end{equation}
\end{theoremH}

Перепишем левую часть \eqref{est:ufe} в виде
\begin{equation}\label{lp:in}
 \int_{\mathcal O} \exp \Bigl(2{\,\ln\,} \bigl|f(z)\bigr|-w(z)\Bigr) \, d \lambda (z),
\end{equation}
где 
\begin{equation}\label{df:w}
	w(z)=v(z)+a {\,\ln\,} \bigl(1+|z|^2\bigr). 
\end{equation}
Для оценки снизу величины \eqref{lp:in} воспользуемся  
неравенством \eqref{inJvdpe2eel}  с $X:=\mathcal O$, $X_0:=B(z,r)\Subset \mathcal  O$ в условиях \eqref{co:rd},  $p=1$, $u:=2{\,\ln\,} |f|$ и с $w$ вместо $v$:
\begin{multline}\label{e:klon}
2B_{\ln |f|}( z,r) \leq B_w(z,r) +		{\,\ln\,} \frac{1}{\lambda\bigl(B(z,r)\bigr)} + \,{\,\ln\,}\left(\,\int\limits_{\mathcal O} e^{2{\,\ln\,} |f|-w}
			\, d\lambda\right)
			\\ 		\overset{\eqref{df:w}}{=} B_v\bigl(z,r\bigr)  +a  \int\limits_{B(z,r)} 
			{\,\ln\,} \bigl(1+|z'|^2\bigr) 
			\,d \lambda (z') 
	+{\,\ln\,} \frac{n!}{\pi^nr^{2n}}+{\,\ln\,} \frac{1}{a} J(g,v,a), 
\\
\overset{\eqref{co:rd}}{\leq} B_v\bigl(z,r\bigr)  +2a {\,\ln\,} \bigl(1+|z|\bigr) 
				+{\,\ln\,} \frac{n!}{\pi^nr^{2n}}+{\,\ln\,} J(g,v) +\const(a),
\end{multline}
что и даёт  требуемую оценку \eqref{efvaz}.

\fullauthor{Хабибуллин Булат Нурмиевич}
\address{450074, г. Уфа, ул. З. Валиди, 32, БашГУ, ФМиИТ, заведующий кафедрой высшей алгебры и геометрии, профессор}
\email{khabib-bulat@mail.ru}

\fullauthor{Khabibullin Bulat Nurmievich}
\address{Prof., Head of the Chair of Higher Algebra and Geometry, Dept. of Math. \& IT, Bash. State Univ., Z. Validi Str., 32,  Ufa, Bashkortostan, 450076, Russian Federation}
\email{khabib-bulat@mail.ru}

\fullauthor{Баладай Рустам Алексеевич}
\address{450076, г. Уфа, ул. З. Валиди, 32, БашГУ, ФМиИТ, кафедра высшей алгебры и геометрии, аспирант}
\email{rbaladai@gmail.com}

\fullauthor{Baladai Rustam Alekseevich}
\address{Postgraduate of the Chair of Higher Algebra and Geometry, Dept. of Math. \& IT, Bash. State Univ., Z. Validi Str., 32, Ufa, Bashkortostan, 450076, Russian Federation}
\email{rbaladai@gmail.com}





\label{lastpage}  


\begin{thebibliography}{99}

\bibitem{Ab1986} Абанин А.\,В. {\it О~некоторых признаках слабой достаточности,\/}
 Матем. заметки, {\bf 40} (4), 442--454 (1986).

\bibitem{Ab1987} Абанин А.\,В. {\it О продолжении и устойчивости слабо достаточных множеств,\/}
 Изв. вузов. Матем., (4), 3--10 (1987). 

\bibitem{Abd} Абанин А.В. \textit{Слабо достаточные множества и абсолютно представляющие системы,\/} Дисс. \dots доктора физ.-матем. наук   (РГУ, Ростов-на-Дону, 1995)

\bibitem{BBT} 
Bierstedt K.\,D., Bonet J., Taskinen J. \textit{Associared weights and spaces of holomorphic functions,\/} 
Studia Math., {\bf 127}, 137--168 (1998).


\bibitem{BKhFa} 
 БайгускаровТ.\,Ю., Хабибуллин Б.\,Н.  \textit{Голоморфные миноранты плюрисубгармонических функций,\/}
 Функц. анализ и его прил., {\bf 50} (1), 76--79 (2016).

\bibitem{Kh01} 
Хабибуллин  Б.\,Н. \textit{Двойственное представление суперлинейных функционалов и его применения в теории функций. II,\/} Изв. РАН. Сер. матем., {\bf 65} (5), 167--190  (2001).

\bibitem{Ep90} Епифанов О.\,В.~\textit{Разрешимость уравнения Коши–Римана с ограничениями роста функций и весовая аппроксимация аналитических функций,\/} Изв. вузов. Матем., 
(2), 49--52 (1990).

\bibitem{Ep92} Епифанов О. В.  \textit{О разрешимости неоднородного уравнения Коши–Римана в классах функций, ограниченных с весом и системой весов}, Матем. заметки, {\bf 51} (1),  83--92  (1992).

\bibitem{BTKhA} Байгускаров~Т.\,Ю., Талипова~Г.\,Р, Хабибуллин~Б.\,Н. \textit{Подпоследовательности нулей для классов целых функций экспоненциального типа, выделяемых ограничениями на их рост вдоль вещественной оси,\/} 
Алгебра и анализ, {\bf 28} (2),  1--33 (2016).

\bibitem{KhB16}
Хабибуллин  Б.\,Н., Байгускаров Т.\,Ю. \textit{Логарифм модуля голоморфной функции как миноранта для субгармонической функции,\/} Матем. заметки, {\bf 99} (4), 588--602  (2016).



\bibitem{JC}
 Ortega-Cerd\'a J. \textit{Multipliers and weighted $\bar{\partial}$-estimates,\/}
Rev. Mat. Iberoamericana, {\bf 18},  355–377 (2002).

\bibitem{N2001}
Niculescu~C.\,P. \textit{An extension of Chebyshev’s inequality and its connection
with Jensen’s inequality,\/} J. Inequal. Appl. 6 (2001), 451–462.

\bibitem{NP06}
Niculescu C. P., Persson~L.-E.  {\it Convex Functions and their Applications. A Contemporary
Approach,\/}  CMS Books in Mathematics, vol. 23 (Springer-Verlag, New York, 2006).

\bibitem{PPP11} 
Pavi\'c~Z.,  Pe\v cari\'c~J.,  Peri\'c~I.
\textit{Integral, discrete and functional variants of Jensen's inequality,}
 Journal of Mathematical Inequalities, {\bf 5} (2), 253--264 (2011).

\bibitem{LH} Horv\'ath L.
\textit{A refinement of the integral form of Jensen’s inequality,\/}
Journal of Inequalities and Applications, {\bf 2012} (178), 1--19 (2012).

\bibitem{AP} Abramovich~Sh., Persson~L.-E.
\textit{Some new estimates of the ‘Jensen gap’,\/}
Journal of Inequalities and Applications, {\bf 2016} (39), 1--9 (2012).

\bibitem{Zhu}
Zhu K.  {\it Analysis on Fock Spaces,\/} Graduate Texts in Mathematics, 263
(Springer-Verlag, 2012). 


\bibitem{Halm} Халмош П.  {\it Теория меры\/}  (ИЛ, М., 1953). 

\bibitem{Bog} Богачев В. И., {\it Основы теории меры,} 2-е изд., в двух томах (НИЦ Регулярная и хаотическая динамика, Москва-Ижевск, 2006).

\bibitem{Bour} Бурбаки Н.~\textit{Функции действительного переменного\/}  (Наука, М., 1965).

\bibitem{L} Ландкоф Н.\,С.~\textit{Основы современной теории потенциала\/} (Наука, М., 1966).

\bibitem{PT} Pe\v{c}ari\'c J.\,E., Proschan А., Tong Y.\,L. \textit{Convex functions, partial orderings, and statistical applications\/} (Academic Press, London, 1992).

\bibitem{LL} Либ Э., Лосс~М. Анализ (Научная книга, Новосибирск,  1998).

\bibitem{Rans}
Ransford Th. {\it Potential Theory in the Complex Plane\/} (Cambridge University Press, Cambridge, 1995). 

\bibitem{H}  H\"ormander~L.  \textit{Notions of Convexity\/}  (Birkh\"aser, Boston, 1994).

\end{thebibliography}
\end{document}